\newcommand{\dx}[1][x]{\,\mathrm{d}#1}
\def\argmin{\mathop{\rm argmin}}
\def\div{\mathop{\rm div}}
\newcommand{\R}{\ensuremath{\mathbb{R}}}
\newcommand{\abs}[1]{\ensuremath{\left\vert#1\right\vert}}
\newcommand{\XX}{\mathcal{X}}
\newcommand{\YY}{\mathcal{Y}}
\let\abs=\envert
\newcommand{\norm}[2]{\left\| #1 \right\|_{#2}}
\def\argmin{\mathop{\rm argmin}}
\newcommand{\tT}{{\scriptscriptstyle \operatorname{T}}}
\DeclareMathOperator{\Id}{Id}
\DeclareMathOperator{\diag}{diag} 
\DeclareMathOperator{\supp}{supp}
\newtheorem{theorem}{Theorem}
\newtheorem{remark}{Remark}
\newtheorem{lemma}{Lemma}
\begin{document}
\author{
Jan Henrik Fitschen, Friederike Laus and Gabriele Steidl \\
\small Department of Mathematics,
 University of Kaiserslautern, Germany \\ \small
  \{fitschen, friederike.laus, steidl\}@mathematik.uni-kl.de}

\title{ 
Dynamic Optimal Transport with
Mixed Boundary Conditions for
Color Image Processing
}
\date{\today}
\maketitle

\begin{abstract}
 \noindent
Recently, Papadakis et al.~\cite{PPO14} proposed an efficient primal-dual algorithm for solving the dynamic optimal transport 
problem with quadratic ground cost and measures having densities with respect to the Lebesgue measure. 
It is based on the fluid mechanics formulation by Benamou and Brenier~\cite{BB00} and proximal splitting schemes. 
In this paper we extend the framework to color image processing. 
We show how the transportation problem for RGB color images can be tackled by prescribing periodic boundary conditions in the color dimension.
This requires the solution of a 4D Poisson equation with mixed Neumann and periodic boundary conditions 
in each iteration step of the algorithm. This 4D Poisson equation 
can be efficiently handled by fast Fourier and Cosine transforms.
Furthermore, we sketch how the same idea can be used in a modified way to transport periodic 1D data 
such as the histogram of cyclic hue components of images. 
We discuss the existence and uniqueness of a minimizer of the associated energy functional.
Numerical examples illustrate the meaningfulness of our approach.
\end{abstract}
%
\section{Introduction} \label{sec:intro}
Recently, methods from optimal transport (OT) have gained a lot of interest in image processing. 
In one of the first applications, the Wasserstein distance (earth mover distance) 
has been successfully used for image retrieval~\cite{RTB00} and since then it has been applied 
to many other tasks as 
color transfer~\cite{PK07, RP11},
(co)segmentation~\cite{NBCE09,PFR12,SS13}, the synthesis and mixing of stationary Gaussian textures~\cite{XFPC13} and the computation of barycenters~\cite{RPDB13,MRSS14}.

The basic problem, going back to Monge (1746-1818), can be formulated as follows: 
Given two probability spaces $(\XX,\mu_0)$ and $(\YY,\mu_1)$ and a nonnegative cost function $c(x,y)$ 
on $\XX\times\YY$, find a transport map $T\colon \XX\to\YY$ that transports the mass of $\mu_0$ to the mass of $\mu_1$ at minimal cost, i.e.,
$T$ minimizes
\begin{equation}\label{Monge_problem}
\int_{\XX} c\bigl(x,T(x)\bigr) \dx[\mu_0(x)] \quad \mbox{subject to} \quad \mu_0\circ T^{-1} = \mu_1.
\end{equation}
One of the major limitations in applications using OT is the fact that it is in general not known whether a solution of problem~\eqref{Monge_problem} 
exists and even in this case the computation of the optimal map $T$ is usually a demanding task (except very few cases, e.g.\ OT on $\R$ with convex costs). 
We focus in the following on a specific instance of problem~\eqref{Monge_problem}, 
namely if $\XX =\YY = \R^{d}$, $c(x,y) = \frac{1}{2}\abs{x-y}^{2}$ and $\mu_0$ and $\mu_1$ are absolutely continuous w.r.t.\ the Lebesgue measure, 
that means there exist probability density functions $f^0$ and $f^1$ with
\begin{equation*}
  \mu_i(A) = \int_{A} f^i(x)\dx,\quad i = 0,1,\quad A\in \mathcal{B}(\R^{d}).
\end{equation*}
In this case there exists a unique optimal transport map $T$ that transports $f^0$ to $f^1$,
see, e.g., \cite{GM96,Vil03}. 
Instead of considering a time independent, ``static'' mass transportation problem 
one may alternatively consider the geodesic path between the two measures w.r.t.\ 
to the Wasserstein metric (the so called displacement interpolation~\cite{McC97}). 
While the static problem can be seen as a distance problem (find the minimal distance between the probability measures $\mu_0$ and $\mu_1$), 
the dynamic problem can be interpreted as a geodesic problem (find an optimal path between $\mu_0$ and $\mu_1$). 
For the $L^2$ ground cost this geodesic is obtained by linear interpolation between the identity and the optimal transport map $T$,
i.e., $\mu_t = \mu_0 \circ T_t^{-1}$, where $T_t = (1-t)\Id + tT$.
Benamou and Brenier \cite{BB00} gave the following equivalent formulation of the dynamic OT problem in terms of fluid mechanics:
minimize
\begin{equation}
\int_{[0,1]} \int_{\R^d}  \frac{1}{2} f(x,t)\abs{v(x,t)}^2 \, \text{d}x \ \text{d}t,\label{Monge_dynamic}
\end{equation}
subject to
$\bigcup_{t\in [0,1]} \supp{f(\cdot,t)}$ bounded, $f(\cdot,0) = f^0, f(\cdot,1) = f^1$
and $\partial_t f + \div_x (fv) = 0$, $(f,v)$ sufficiently smooth.
Substituting $m = fv$, this problem becomes convex and can be treated by respective algorithms.

In addition to the Dirichlet boundary condition for the time interval, 
problem~\eqref{Monge_dynamic} needs  
to be  equipped with spatial boundary conditions in practical applications
(appearing in the momentum variable $m$). 
A natural choice which was also used in \cite{PPO14} for  gray-value images are Neumann boundary conditions. 
In this paper we want to deal with color  RGB (red, green, blue)  images. 
More precisely, we consider a $m \times n$ RGB image as 
a 3D  object of size $m \times n \times 3$ with the color values in the third dimension, 
i.e., we interpret these images as (realization of) a 3D density function. 
To use Neumann boundary conditions for the color dimension is certainly not a good idea
since the solution can depend on the ordering of the color channels.
Therefore, we suggest to establish periodic boundary conditions in the third dimension. 
Note that we have Dirichlet boundary conditions in time, and  Neumann plus periodic spatial boundary conditions. 
Fig.~\ref{Fig:boundary} illustrates the effect of the different boundary conditions. 
%
\begin{figure*} \centering
{\includegraphics[width=.1\textwidth]{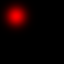}} 
{\includegraphics[width=.1\textwidth]{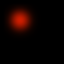}} 
{\includegraphics[width=.1\textwidth]{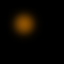}} 
{\includegraphics[width=.1\textwidth]{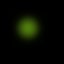}} 
{\includegraphics[width=.1\textwidth]{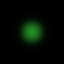}} 
{\includegraphics[width=.1\textwidth]{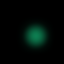}} 
{\includegraphics[width=.1\textwidth]{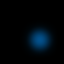}} 
{\includegraphics[width=.1\textwidth]{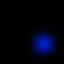}} 
{\includegraphics[width=.1\textwidth]{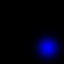}} \\[1.5ex]
{\includegraphics[width=.1\textwidth]{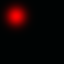}} 
{\includegraphics[width=.1\textwidth]{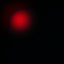}} 
{\includegraphics[width=.1\textwidth]{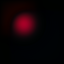}} 
{\includegraphics[width=.1\textwidth]{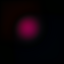}} 
{\includegraphics[width=.1\textwidth]{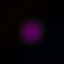}} 
{\includegraphics[width=.1\textwidth]{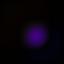}} 
{\includegraphics[width=.1\textwidth]{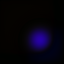}} 
{\includegraphics[width=.1\textwidth]{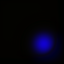}} 
{\includegraphics[width=.1\textwidth]{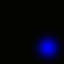}}
\caption{\label{Fig:boundary}
OT results for transporting a red Gaussian into a blue one with different boundary conditions
for the third (color) dimension. 
First row: Neumann boundary conditions; the color is transported from red over green to blue.
This changes if we permute the RGB channels.
Second row:  periodic boundary conditions; the color is transported from red over violet to blue which is more intuitive
and does not change for permuted color channels.
}
\end{figure*}
%
In Section~\ref{sec:model} we explain the discretization of problem~\eqref{Monge_dynamic} 
emphasizing the mixed boundary conditions. In particular we deal with the existence and uniqueness of minimizers.
Similarly as suggested in \cite{PPO14} we apply a primal dual algorithm to find a minimizer in Section \ref{sec:alg}.
In contrast to \cite{PPO14} each iteration step of this algorithm requires the solution of
a 4D Poisson problem with mixed Neumann and periodic boundary conditions which can be efficiently computed by applying FFTs
and fast cosine transforms.
In Section~\ref{sec:RGB} we apply our findings to the dynamic OT of RGB images which was the initial motivation of this work. 
Another application is given in Section \ref{sec:hue}.
Here, the (cyclic) OT is applied to HSV (hue, saturation, value) images, where only the cyclic hue component
is transported. For further details we refer to \cite{Laus15}.
%
\section{Model for dynamic OT with mixed boundaries} \label{sec:model}  
Rewriting~\eqref{Monge_dynamic} for $m = fv$, we see that the geodesic path 
between measures with probability densities $f^0 = f(\cdot,0)$ and $f^1= f(\cdot,1)$ has density $f(\cdot,t)$ fulfilling
\begin{equation} \label{cont_model}
\argmin_{(m,f) \in {\cal C} }  \int_{[0,1]}\int_{[0,1]^d} J(m,f) \, \text{d}x \ \text{d}t,
\end{equation}
where
\begin{align}
J(&m(x,t),f(x,t)) 
:= 
\left\{
\begin{array}{cl}
\frac{|m(x,t)|^2}{2 f(x,t)} & {\rm if} \; f(x,t) > 0,\\
0 & {\rm if} \; (m(x,t), f(x,t)) =  \left( 0_d^\tT,0 \right),\\
+ \infty & {\rm otherwise},
\end{array}
\right.
\end{align}
\begin{align} \label{cont_bound}
\mathcal{C} := 
\big\{
&(f,m): 
\partial_t f + {\rm div}_x m = 0,
f(\cdot,0) = f^0, \; f(\cdot,1) = f^1\big\}
\end{align}
with appropriate boundary conditions.
In the following, we describe the discretization of problem \eqref{cont_model} for one spatial dimension with
cyclic spatial boundary conditions. 
The discretization of the continuity equation demands the evaluation of discrete partial derivatives in time as well as in space. In order to avoid solutions suffering from the well-known checkerboard-effect (see for instance~\cite{Pat80}) we adopt the idea of a staggered grid as in \cite{PPO14}.

{\bf Discretization (Spatial 1D):}
We consider the values of $f$  at spatial cell midpoints 
$\frac{j-1/2}{n}$, $j=1,\ldots,n$
and time 
$\frac{k}{p}$, $k=1,\ldots,p-1$, and denote the corresponding array by
$\left( f(j-\frac12,k) \right)_{j=1,k=1}^{n,p-1} \in \R^{n,p-1}$. 
The boundary values are assumed to be fixed in
$f^0 := \left( f(\frac{j-1/2}{n},0) \right)_{j=1}^n$ 
and 
$f^1 := \left( f(\frac{j-1/2}{n},1) \right)_{j=1}^n$,
where $f^i \ge 0$, $i=0,1$ and
$\|f^0\|_1 =  \|f^1\|_1$. We can skip the normalization $\|f^0\|_1 = 1$ here.
The values of $m$ are taken at the cell faces 
$\frac{j}{n}$, $j = \kappa,\ldots,n-1$
and time 
$\frac{k-1/2}{p}$, $k=1,\ldots,p$ 
and we consider the array
$\left( m(j,k-\frac12) \right)_{j=\kappa,k=1}^{n-1,p} \in \R^{n-\kappa,p}$,
where $\kappa = 1$ for Neumann boundary conditions and $\kappa = 0$
for periodic boundary conditions, see Fig. \ref{fig:grid}.
To give a sound matrix-vector notation of the discrete minimization problem
we reorder $m$ and $f$ columnwise into vectors
$f = {\rm vec}(f) \in \R^{n(p-1)}$, 
$m  =  {\rm vec} (m) \in \R^{(n-\kappa)p}$.
Since it becomes clear from the context if we deal with arrays or vectors
we use the same notation.
The derivatives in $\mathcal{C}$
are approximated by forward differences 
and  the integral in \eqref{cont_model} by a midpoint rule, 
where the midpoints 
$\left( u(j-\frac12,k-\frac12) \right)_{j,k=1}^{n,p}$ and $\left( v(j-\frac12,k-\frac12)\right)_{j,k=1}^{n,p}$ 
are computed by averaging the neighboring two values.
%
\begin{figure}[ht]
\begin{center}
\includegraphics[width=0.25\textwidth]{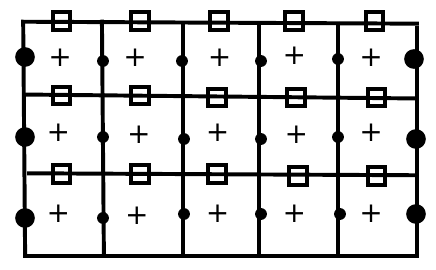}
\end{center}
\caption{ 
\label{fig:grid} Discretization grid  for the dynamic OT problem for 1D signals $f$:
horizontal direction  for time; vertical direction for space;
$\bullet$ given boundary sampling nodes $f^0$ and $f^1$,
$\cdot$ sampling nodes for $f((j-1/2)/n,k)$, $j = 1,\ldots,n$, $k=1,\ldots,p-1$,
$\square$ sampling nodes for $m$,
'+' quadrature nodes.
}

\end{figure}
%
This results in the following discrete model:
\begin{align} \label{model_disc}
&\argmin_{ m,f,u,v } \| J(u, v) \|_1 , \\
&\mbox{subject to} \quad  S_{M} m = u, \quad S_{F} f + f_b^+ = v, \label{subst}\\
&\qquad \qquad \quad \underbrace{( D_{M} |  D_{F} )}_{A}
\begin{pmatrix} m\\f \end{pmatrix} = f_b^-. \label{cont_d}
\end{align}
We denote 
\begin{equation} \label{bound_disc}
{\cal C}_d := \left\{
\begin{pmatrix} m\\f \end{pmatrix} \in \R^{(n-\kappa)p+n(p-1)}: 
( D_M |  D_F )
\begin{pmatrix} m\\f \end{pmatrix} = f_{b}^- 
\right\}.
\end{equation}
The involved vectors are defined as
\begin{align}
f_{b}^+ &:= \frac12 \bigl( (f^0)^\tT, 0_{n(p-2)},  (f^1)^\tT \bigr)^\tT
, \\
f_{b}^- &:= p \bigl( (f^0)^\tT, 0_{n(p-2)}, -(f^1)^\tT \bigr)^\tT
\end{align}
and the matrices using the Kronecker product $\otimes$ as

\begin{align}
S_F := S_p^ \tT \otimes I_n, \quad D_F := -D_p^\tT \otimes I_n,
\end{align}
\begin{align}
S_M := \left\{
\begin{array}{ll}
I_p \otimes S_n^\tT & \: \mbox{Neumann},\\
I_p \otimes S_{n,per}^\tT & \: \mbox{periodic},
\end{array}
\right.
D_M :=
\left\{
\begin{array}{ll}
I_p \otimes -D_n^\tT & \mbox{Neumann},\\
I_p \otimes D_{n,per}^\tT & \mbox{periodic},
\end{array}
\right.
\end{align}
and
{\scriptsize
$$
S_{n,per} := 
\frac12 \left( 
\begin{array}{rrrrrrr}
1&0& &      & & &1\\
1&1& &            \\
 & & &\ddots& & &   \\
 & & &      &1&1&0\\
 & & &      & &1&1
\end{array}
\right) \in \R^{n,n},
$$
$$
S_p := 
\frac12 \left( 
\begin{array}{rrrrrr}
1&1&              \\
 &1&1&            \\
 & & &\ddots& &   \\
 & & &      &1&1
\end{array}
\right) \in \R^{p-1,p},
$$
$$
D_{n,per} := 
n \left( 
\begin{array}{rrrrrr}
-1&  &   &   &  &1 \\
 1&-1&          \\
  & & &\ddots& & \\
  & & &      &-1& 0\\
  & & &      &1&-1
\end{array}
\right) \in \R^{n,n},
$$
$$
D_p := p \left( 
\begin{array}{rrrrrrrr}
-1&\; 1&                 \\
  &-1  & \; 1&                \\
  &    &     &\ddots&  &    \\
  &    &     &      &  &-1&1& 0\\
  &    &     &      &  &  &-1& 1
\end{array} 
\right) \in \R^{p-1,p}.
$$
}
Since we have to be slightly careful concerning the uniqueness of the solution
in the periodic setting we provide the following proposition.
%
\begin{theorem} \label{existence}
The discrete dynamic transport model \eqref{model_disc} has a solution.
\end{theorem}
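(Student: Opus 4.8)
The plan is to recast \eqref{model_disc} by using the substitutions \eqref{subst} to eliminate the slack variables $u,v$, and then to apply the direct method of the calculus of variations on the affine feasible set. After elimination the problem reads
\begin{equation*}
\minimize{(m,f)\in\mathcal{C}_d}{E(m,f)}, \qquad E(m,f) \defeq \norm{J\bigl(S_M m,\, S_F f + f_b^+\bigr)}{1}.
\end{equation*}
Since $J$ is the closure of the perspective function of $u\mapsto\tfrac12\abs{u}^2$, it is nonnegative, proper, lower semicontinuous and convex; precomposition with the affine maps $m\mapsto S_M m$ and $f\mapsto S_F f+f_b^+$ preserves these properties, so $E$ is convex, nonnegative and lower semicontinuous, and $\mathcal{C}_d$ from \eqref{bound_disc} is a closed affine set. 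It thus suffices to show that (i) $E$ is proper, i.e.\ $\mathcal{C}_d$ carries a point of finite energy, and (ii) some minimizing sequence is bounded; lower semicontinuity then delivers a minimizer as a cluster point.

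For (i) I would first note that the system \eqref{cont_d} is consistent: $f_b^-$ lies in the range of $A$ because $\ker A^\tT$ consists of the vectors that are spatially constant on each time slice, and against these the mass-balance hypothesis $\norm{f^0}{1}=\norm{f^1}{1}$ forces the pairing with $f_b^-$ to vanish. A concrete finite-energy competitor is the linear-in-time interpolation $f(\cdot,k)=(1-\tfrac{k}{p})f^0+\tfrac{k}{p}f^1$ together with a momentum $m$ solving \eqref{cont_d}; since $f^0,f^1\ge 0$ this keeps $v=S_Ff+f_b^+\ge 0$, and the momentum can be taken to vanish on the zero-density region, so that every summand of $E$ is finite.

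For (ii) take a minimizing sequence $(m_k,f_k)$ with $E(m_k,f_k)\le c<\infty$. Finiteness of $J$ forces $v_k=S_Ff_k+f_b^+\ge 0$ entrywise, while summing \eqref{cont_d} over space (using that $\mathbf{1}_n$ lies in the kernel of the spatial difference block) shows that the total mass on each time slice is conserved and equals $\norm{f^0}{1}$; hence $0\le v_{k,i}\le\norm{f^0}{1}$ and $v_k$ is bounded. Because $S_F=S_p^\tT\otimes I_n$ has full column rank it is injective, so $f_k$ is bounded as well, and from $\abs{u_{k,i}}^2\le 2c\,v_{k,i}$ the quadrature momenta $u_k=S_Mm_k$ are bounded too.

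The main obstacle is recovering a bound on $m_k$ itself. In the Neumann case $S_M=I_p\otimes S_n^\tT$ is injective and we are done; in the periodic case, however, for even $n$ the circulant average $S_{n,per}^\tT$ annihilates the alternating vector $(1,-1,\dots,1,-1)^\tT$, so $u_k$ does not control the component of $m_k$ in $\ker S_M$. This is exactly the degeneracy announced before the statement. To handle it I would split $m_k=m_k^\perp+m_k^0$ with $m_k^0\in\ker S_M$; then $m_k^\perp$ is bounded by injectivity of $S_M$ on $(\ker S_M)^\perp$, and rearranging \eqref{cont_d} yields $D_Mm_k^0=f_b^- - D_Ff_k - D_Mm_k^\perp$ with bounded right-hand side. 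The key algebraic fact is that $\ker S_M\cap\ker D_M=\{0\}$: on each spatial block the alternating direction spanning $\ker S_{n,per}^\tT$ is not annihilated by $D_{n,per}^\tT$, whose kernel consists of constants, and constants and alternating vectors meet only in $0$. Hence $D_M$ is injective on $\ker S_M$, so $m_k^0$, and therefore $(m_k,f_k)$, are bounded. A convergent subsequence has its limit in the closed set $\mathcal{C}_d$, and lower semicontinuity of $E$ shows that this limit attains the infimum.
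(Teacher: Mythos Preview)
Your argument is correct and reaches the same conclusion as the paper, but by a genuinely different route. The paper eliminates $f$ in favour of $m$ via the Moore--Penrose inverse $D_F^\dagger$, rewrites the energy as $\|J(X(m),Y(m))\|_1$ with $X(m)=S_M m$ and $Y(m)=-S_F D_F^\dagger D_M m + S_F D_F^\dagger f_b^- + f_b^+$, and then argues coercivity in $m$ by contradiction: if $\|m\|_2\to\infty$ with bounded energy, then $\|X(m)\|_2^2\le c\|Y(m)\|_1$, and a norm comparison rules this out, with a separate and somewhat involved treatment of the case $\mathcal N(S_M)\neq\{0\}$ using the structure of $S_F D_F^\dagger D_M$ on that kernel. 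You instead keep both variables, bound $v=S_Ff+f_b^+$ directly from mass conservation plus nonnegativity (summing the discrete continuity equation over space), recover a bound on $f$ from injectivity of $S_F$, bound $u=S_Mm$ from the energy, and finally control the kernel component of $m$ through the clean algebraic fact $\ker S_M\cap\ker D_M=\{0\}$ (indeed $D_{n,per}^\tT\tilde 1_n=-2n\,\tilde 1_n\neq 0$). Your path is more elementary---no pseudo-inverse, no chain of norm inequalities---and it isolates the obstruction in the periodic even-$n$ case more transparently; the paper's approach, on the other hand, reduces everything to a single variable and establishes coercivity in one stroke. Two minor remarks: your description of $\ker A^\tT$ is slightly too large (it is the \emph{intersection} of the spatially-constant and temporally-constant subspaces, hence just the span of $1_{np}$), though the consistency check $\langle 1_{np},f_b^-\rangle=0$ still goes through; and the properness step---that one can choose $m$ with $S_Mm=0$ on the zero-density cells---deserves a line of justification when $f^0,f^1$ have common zeros, although the paper itself simply asserts properness without argument.
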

%
\begin{proof}
For periodic boundary conditions and even $n$, we have ${\cal N}(S_M)= \{w \otimes \tilde 1_n: w \in \mathbb R^p\}$ 
with $\tilde 1_n := (1,-1,\ldots,1,-1)^\tT \in \mathbb R^n$,
and ${\cal N}(S_M) = \{0_{np}\}$ otherwise.

The constraints in ${\cal C}_d$ can be rewritten as
$$
 D_F f = f_b^- - D_Mm,\quad
 f= D_F^\dagger f_b^- - D_F^\dagger D_M m
$$
with the Moore-Penrose inverse $D_F^\dagger = (D_F^\tT D_F)^{-1} D_F^\tT$.
Then we obtain
$$
\argmin_{u,v} \|J(u,v)\|_1 = \argmin_{m} \left \| J \left( X(m),Y(m) \right) \right\|_1 
$$
with 
$X(m) := S_M m$, $Y(m) := - S_F D_F^\dagger D_M m + S_F D_F^\dagger f_b^- + f_b^+$.
Let $\|m\|_2 \rightarrow +\infty$ and assume that 
$ \| J \left( X(m),Y(m) \right) \|_1$ is bounded.
Then each of the quotients is bounded, i.e., there exists $c>0$ such that
$X(m)_i^2 \le c |Y(m)_i|$ and therefore
$\|X(m)\|_2^2 \le c \|Y(m)\|_1$.
Thus, in the case ${\cal N}(S_M) = \{0_{np}\}$, we get
\begin{align} \label{exist_1}
(1/\|S_M^\dagger\|_2 ^2) \|m\|_2^2 &\le \|X(m)\|_2^2 \le c \|Y(m)\|_1 \\
&\le c \| S_F D_F^\dagger D_M\|_1 \|m\|_1 + \tilde c,
\end{align}
which is not possible as $\|m\|_2 \rightarrow +\infty$.
In the case ${\cal N}(S_M)= \{w \otimes \tilde 1_n: w \in \mathbb R^p\}$ 
we use the orthogonal splitting $m = m_R + w \otimes \tilde 1_n$, where $m_R \in {\cal R}(S_M^\tT)$.
Straightforward computation shows that 
$S_F D_F^\dagger D_M ( w \otimes \tilde 1_n) =  \tilde w \otimes \tilde 1_n$
with some
$\tilde w \in \mathbb R^p$.
Since $Y(m) \ge 0$, we conclude
\begin{equation} \label{another_norm}
\|- S_F D_F^\dagger D_M m_R + S_F D_F^\dagger f_b^- + f_b^+\|_\infty \ge \| \tilde w \otimes \tilde 1_n \|_\infty.
\end{equation}
If $m_R$ remains finite, then $\tilde{1}_N \otimes \widetilde{w}$ remains finite as well.
	Since the kernel of $S_{\rm f} D^\dagger_{\rm f} D_{\rm m}$ consists of constant vectors, 
	this is only possible if $w$ is a multiple of $1_P$.
	But in this case $J_p(X(m),Y(m))$ has a finite value which is reached.
	It remains to consider the case $\|m_R\|_2 \to +\infty$.
Then we obtain similarly as in \eqref{exist_1} that
\begin{align}
(1/\|S_M^\dagger|_{{\cal R} (S_M^\tT) }\|_2 ^2) \|m_R\|_2^2 
\le 
\|X(m_R)\|_2^2 \le c \|Y(m)\|_1 \\
\le c\|- S_F D_F^\dagger D_M m_R + S_F D_F^\dagger f_b^- + f_b^+\|_1 + c\| \tilde w \otimes \tilde 1_n \|_1.
\end{align}
By \eqref{another_norm} we see that this is not possible as $\|m_R\|_2 \rightarrow +\infty$.
In summary, we have that
$ \| J \left( X(m),Y(m) \right) \|_1$ is coercive
and since it is also proper and lower semi-continuous, it has a minimizer.
\end{proof}
Unfortunately, 
$J(m,f)$ is not strongly convex on its domain.
As it can be seen in the following lemma it is even not strictly convex.
%
\begin{lemma} \label{perspective}
For any two minimizers $(m_i,f_i)$, $i=1,2$ the relation
$$\frac{S_M m_1}{S_F f_1 + f_b^-} = \frac{S_M m_2}{S_F f_2 + f_b^-}$$
holds true.
\end{lemma}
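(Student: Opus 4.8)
The plan is to use convexity of the discrete model together with the observation that $J$ is the \emph{perspective function} of $s \mapsto \tfrac12 s^2$, whose only direction of degeneracy is the radial one. First I would record that the feasible set $\mathcal{C}_d$ from \eqref{bound_disc} is affine and that, after eliminating $u,v$ through the substitutions \eqref{subst}, the objective $(m,f) \mapsto \|J(S_M m, S_F f + f_b^+)\|_1$ is convex; hence the set of minimizers is convex. Taking two minimizers $(m_1,f_1)$ and $(m_2,f_2)$ with common optimal value $F^\ast$, their midpoint is again feasible, and convexity of the objective gives a value $\le F^\ast$, which must therefore equal $F^\ast$. Consequently the underlying convexity (Jensen) inequality is tight.

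Writing $u_\ell := S_M m_\ell$ and $v_\ell := S_F f_\ell + f_b^+$ for $\ell=1,2$, I would next argue that tightness propagates to each coordinate: since $\|J(\cdot,\cdot)\|_1$ is a sum of the individually convex terms $J(u_{\ell,i},v_{\ell,i}) \ge 0$, equality for the sum forces
\[
J\!\left(\tfrac{u_{1,i}+u_{2,i}}{2}, \tfrac{v_{1,i}+v_{2,i}}{2}\right) = \tfrac12 J(u_{1,i},v_{1,i}) + \tfrac12 J(u_{2,i},v_{2,i})
\]
for every index $i$. Finiteness of the objective at a minimizer ensures that, for each $i$ and $\ell$, either $v_{\ell,i}>0$ or $(u_{\ell,i},v_{\ell,i})=(0,0)^\tT$.

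The core step is the equality analysis of the convexity of $J$ on the open half-plane $\{v>0\}$. A direct computation gives
\[
\nabla^2 J(u,v) = \frac{1}{v^{3}} \begin{pmatrix} v \\ -u \end{pmatrix} \begin{pmatrix} v & -u \end{pmatrix},
\]
a rank-one positive semidefinite matrix whose kernel is spanned by the radial vector $(u,v)^\tT$. Thus $J$ is strictly convex in every direction transverse to the rays emanating from the origin, so the tight inequality above forces the points $(u_{1,i},v_{1,i})$ and $(u_{2,i},v_{2,i})$ onto one common ray through the origin. Whenever both denominators are positive this yields $u_{1,i}/v_{1,i} = u_{2,i}/v_{2,i}$, which is exactly the asserted componentwise identity (with $v_\ell = S_F f_\ell + f_b^+$ as in \eqref{subst}).

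The step I expect to be the main obstacle is the degenerate boundary case in which $v_{\ell,i}=0$, hence also $u_{\ell,i}=0$, for one of the two minimizers, since there $J$ is not twice differentiable and the Hessian argument no longer applies. I would handle it by returning to the piecewise definition of $J$ and checking directly that $J$ is linear along the segment joining $(0,0)^\tT$ to any $(u,v)^\tT$ with $v>0$, so that such a segment is automatically a ray through the origin; the ratio identity then holds in the remaining coordinates, while the $0/0$ coordinates are read off consistently (equivalently, one invokes strict positivity of the interpolated densities $v_{\ell,i}$ to exclude this case altogether). A secondary, purely technical point is to confirm that the midpoint genuinely lies in $\mathcal{C}_d$, which is immediate from the affineness of the constraint in \eqref{bound_disc}.
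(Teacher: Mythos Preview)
Your proposal is correct and follows essentially the same line as the paper's proof: both exploit that $J$ is the perspective function of the strictly convex $\psi(s)=\tfrac12 s^2$, so the convexity inequality for $J$ is strict unless $(u_1,v_1)$ and $(u_2,v_2)$ lie on a common ray through the origin, i.e., $u_1/v_1=u_2/v_2$. The only cosmetic difference is that you certify strict convexity in non-radial directions via the rank-one Hessian, whereas the paper does it by plugging the convex combination into the formula $J(u,v)=v\,\psi(u/v)$ and invoking strict convexity of $\psi$ directly; your treatment of the boundary case $(u,v)=(0,0)$ and of componentwise tightness is also more explicit than the paper's.
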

%
\begin{proof} 
The function $J(u,v)$ is the perspective function of the strictly convex 
function $\psi = | \cdot |^2$,
i.e.,  $J(u,v) = v \psi\left(\frac{u}{v} \right)$, see, e.g., \cite{DM08}.
For $\lambda \in (0,1)$ and $(u_i,v_i)$ with $v_i >0$, $i=1,2$, we have (componentwise)
\begin{align}
 J \left( \lambda (u_1,v_1) + (1-\lambda) (u_2,v_2) \right)
 = 
 \left( \lambda v_1 + (1-\lambda)v_2 \right) \psi \left( \frac{\lambda u_1 + (1-\lambda)u_2}{\lambda v_1 + (1-\lambda)v_2} \right)\\
= 
(\lambda v_1 + (1-\lambda) v_2) \psi 
\left( 
\frac{\lambda v_1}{\lambda v_1 + (1-\lambda)v_2} \frac{u_1}{v_1}\right. 
\left.
+ \frac{(1-\lambda) v_2}{\lambda v_1 + (1-\lambda)v_2} \frac{u_2}{v_2}
\right) 
\end{align}
and if 
$\frac{u_1}{v_1} \not = \frac{u_2}{v_2}$
by the strict convexity of $\psi$,
\begin{align}
J \left( \lambda (u_1,v_1) + (1-\lambda) (u_2,v_2) \right)
< 
\lambda J(u_1,v_1) + (1-\lambda)J(u_2,v_2),
\end{align}
which proves the assertion.
\end{proof}
%
\begin{remark} \label{unique}
For periodic boundary conditions, even $n$ and 
$f^1 = f^0 + \gamma \tilde 1_n$,
$\gamma \in [0,\min f^0 )$ the minimizer of \eqref{model_disc} is not unique which can be seen as follows:
obviously, we would have a minimizer $(m,f)$ if 
$m = w \otimes \tilde 1_n \in {\cal N}(S_M)$ for some $w \in \mathbb R^p$
and there exits $f\ge 0$ which fulfills the constraints.
Setting $f^{k/p} := f(j-1/2,k)_{j=1}^n$, $k=0,\ldots,p$, these constraints read
$-2p w \otimes \tilde 1_n = p (f^{(k-1)/p} - f^{k/p})_{k=1}^p$. 
Thus, any $w \in \mathbb R^p$ such that
\begin{align} 
	f^{1/p} &= f^0 + 2w_1 \tilde1_n, \;
	f^{2/p} = f^0 + 2(w_1 + w_2) \tilde 1_n, \ldots \, ,\\
	f^{1} &= f^0 + 2(w_1 + w_2+ \ldots+w_p) \tilde 1_n
\end{align}
are nonnegative vectors provides a minimizer of \eqref{model_disc}.
We conjecture that the solution is unique in all other cases,
but have no proof so far.
\end{remark}
%
\section{Minimization Algorithm} \label{sec:alg}  
We apply the primal-dual algorithm of Chambolle and Pock \cite{CP11}
in the form of Algorithm 8 in \cite{BSS14}.
%
	\begin{algorithm}
	{\small	\KwInit{$m^{(0)}=0_{np}$, $f^{(0)}=0_{n(p-1)}$, 
		$b_m^{(0)}=b_f^{(0)}=\bar b_m^{(0)}=\bar b_f^{(0)}=0_{np}$, 
		$\theta =1$, $\tau,\sigma$ with  $\tau \sigma < 1$.} \\
		\KwIter{For $r = 0,1,\ldots$ iterate}
		\begin{align}
			1. \begin{pmatrix} m^{(r+1)} \\ f^{(r+1)} \end{pmatrix}  
			&:= \ \argmin_{(m,f) \in {\cal C}_d} 
			 \frac{1}{2\tau} \| 
			\begin{pmatrix} m \\ f \end{pmatrix} 
			- \begin{pmatrix} m^{(r)} \\ f^{(r)} \end{pmatrix} +
			\tau \sigma  
			\begin{pmatrix} 
			S_M^\tT & 0 \\
			0             & S_F^\tT
			\end{pmatrix} 
			\begin{pmatrix} \bar b_m^{(r)} \\ \bar b_f^{(r)} \end{pmatrix} \|_2^2 
			 \\
			2. \ \begin{pmatrix} u^{(r+1)} \\ v^{(r+1)} \end{pmatrix}  
			&:= \ \argmin_{u,v} \|J(u,v) \|_1 
			 + \frac{\sigma}{2} 
			 \|
			\begin{pmatrix} u \\ v \end{pmatrix} \\
			 -& \begin{pmatrix} 
			S_M & 0 \\
			0             & S_F
			\end{pmatrix}
			\begin{pmatrix} m^{(r+1)} \\ f^{(r+1)} \end{pmatrix} 
			-
			\begin{pmatrix} 
			 0 \\
			f_b^+
			\end{pmatrix}
			- 
			\begin{pmatrix} b_m^{(r)} \\ b_f^{(r)} \end{pmatrix} 
			\|_2^2			
			\\
			3. \qquad 
			b_m^{(r+1)} &:= \ b_m^{(r)} + S_M m^{(r+1)} - u^{(r+1)} \\
			b_f^{(r+1)} &:= \ b_f^{(r)} + S_F f^{(r+1)} + f_b^+ - v^{(r+1)}    		
			\\ 
			4. \qquad 
			\bar b_m^{(r+1)} &:= \ b_m^{(r+1)} + \theta (b_m^{(r+1)}-b_m^{(r)} ) \\
			\bar b_f^{(r+1)} &:= \ b_f^{(r+1)} + \theta (b_f^{(r+1)}-b_f^{(r)} )
		\end{align}
		\caption{PDHG Algorithm for solving \protect \eqref{model_disc}}}%
	\end{algorithm} \label{alg1}
%
{\bf Step 1} requires the projection of 
$$
a := \begin{pmatrix} m^{(r)} \\ f^{(r)} \end{pmatrix} -
			\tau \sigma  
			\begin{pmatrix} 
			S_M^\tT & 0 \\
			0             & S_F^\tT
			\end{pmatrix} 
			\begin{pmatrix} \bar b_m^{(r)} \\ \bar b_f^{(r)} \end{pmatrix}			
$$
onto ${\cal C}_d$ which is given by 
\begin{align}
\Pi_{{\cal C}_d} (a) &= a - A^\tT (A A^\tT )^\dagger (Aa- f_b^-), \\  
(AA^\tT)^\dagger &=  Q \diag(\tilde \lambda_j) Q^\tT,
\end{align}
where $A A^\tT$ has the spectral decomposition 
$
A A^\tT = Q \diag(\lambda_j)  Q^\tT
$ 
and 
$\tilde \lambda_j := 1/\lambda_j$ if $\lambda_j >0$ and zero otherwise.
For $A$ in \eqref{cont_d} the application of $(AA^\tT)^\dagger$
requires the solution of a 2D Poisson equation. 
In case of periodic boundary conditions we get
\begin{align}
A A^\tT  &= I_p \otimes D_{n,per}^\tT D_{n,per} + D_{p}^\tT D_{p} \otimes I_n \\
&= I_p \otimes  n^2 \Delta_{n,per} + p^2 \Delta_p \otimes I_n,
\end{align}
where
{\scriptsize
\begin{align}
\Delta_{n,per} &:= 
\left(
\begin{array}{rrrrrrrr}
2 &-1&  &      &  &  &  &-1            \\
-1& 2&-1&                  \\
  &  &  &\ddots&  &        \\
  &  &  &      &  &-1&2 &-1 \\
-1&  &  &      &  &  &-1& 2
\end{array} 
\right), \\
\Delta_p &:= 
\left(
\begin{array}{rrrrrrrr}
1 &-1&  &      &  &  &  &0            \\
-1& 2&-1&                  \\
  &  &  &\ddots&  &        \\
  &  &  &      &  &-1&2 &-1 \\
0&  &  &      &  &  &-1& 1
\end{array} 
\right) .
\end{align}
}
Since 
$$
\Delta_{n,per} = \frac1n \bar F_n \diag(q_{n,per} ) F_n, \quad q_{n,per} := \left( 4\sin^2 \frac{j \pi}{n} \right)_{j=0}^{n-1}
$$ 
with the Fourier matrix
$F_n := (\mathrm{e}^{-2\pi \mathrm{i}jk})_{j,k=0}^{n-1}$
and
$$
\Delta_p = C_p^\tT \diag(q_p) C_p, \quad q_p := \left( 4\sin^2 \frac{j \pi}{2p} \right)_{j=0}^{p-1}
$$
with the DCT-II matrix
$$C_p := \sqrt{\frac{2}{p}} \left( \epsilon_j \cos\frac{j(2k+1)\pi}{2p} \right)_{j,k=0}^{p-1}, \quad 
\epsilon_j := 
\left\{
\begin{array}{ll}
1/\sqrt{2} &{\rm if} \; j=0,\\
1&{\rm otherwise}
\end{array} 
\right.
$$
we obtain 
\begin{align}
A A^\tT  = &(C_p^\tT \otimes \frac1n \bar F_n ) (n^2 I_p \otimes \diag(q_{n,per} ) \\ 
&+ p^2 (\diag(q_p) \otimes I_n) (C_p \otimes F_n),
\end{align}
so that its pseudo-inverse can be computed  by the FFT and the fast cosine transform.

\begin{remark} \label{poisson_4D}
For the transport of general 2D RGB images we have analogously to solve a 4D Poisson equation
with Neumann boundary conditions and a periodic boundary condition for the color channels.
\end{remark}

{\bf Step 2} of the algorithm can be computed componentwise as proposed in \cite{PPO14}.
Setting 
$a_m := S_M m^{(r+1)} + b_m^{(r)}$, $a_f := S_F f^{(r+1)} + b_f^{(r)} + f_b^+$
we have to find componentwise 
$$
\argmin_{u,v} \frac{u^2}{2v} + \frac{\sigma}{2} (u -a_m)^2 + \frac{\sigma}{2} (v -a_f)^2.
$$
Setting the gradient to zero yields
\begin{align}
\frac{u}{v} + \sigma (u - a_m) = 0,\quad
-\frac12 \frac{u^2}{v^2} + \sigma (v-a_f) =0.
\end{align}
Thus,
$$
u = \frac{\sigma v a_m}{1 + \sigma v}
$$
and $v$ is the solution of the third order equation
$$
f(v) = 2(1+\sigma v)^2 (v-a_f) - \sigma a_m^2 = 0.
$$
This can be solved by few Newton steps
which can be computed simultaneously for all components.
Alternatively, one may use  Cardan's formula.
\begin{figure*} \centering
{\includegraphics[width=.10\textwidth]{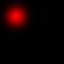}} 
{\includegraphics[width=.10\textwidth]{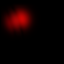}} 
{\includegraphics[width=.10\textwidth]{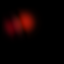}} 
{\includegraphics[width=.10\textwidth]{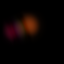}} 
{\includegraphics[width=.10\textwidth]{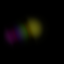}} 
{\includegraphics[width=.10\textwidth]{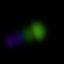}} 
{\includegraphics[width=.10\textwidth]{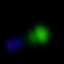}} 
{\includegraphics[width=.10\textwidth]{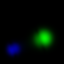}} 
{\includegraphics[width=.10\textwidth]{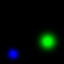}} \\[1.5ex]
{\includegraphics[width=.10\textwidth]{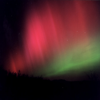}} 
{\includegraphics[width=.10\textwidth]{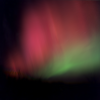}} 
{\includegraphics[width=.10\textwidth]{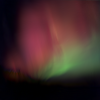}} 
{\includegraphics[width=.10\textwidth]{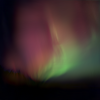}} 
{\includegraphics[width=.10\textwidth]{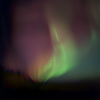}} 
{\includegraphics[width=.10\textwidth]{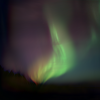}} 
{\includegraphics[width=.10\textwidth]{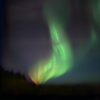}} 
{\includegraphics[width=.10\textwidth]{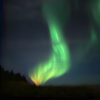}} 
{\includegraphics[width=.10\textwidth]{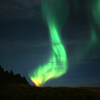}} \\[1.5ex]
{\includegraphics[width=.10\textwidth]{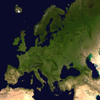}} 
{\includegraphics[width=.10\textwidth]{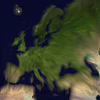}} 
{\includegraphics[width=.10\textwidth]{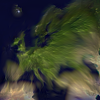}} 
{\includegraphics[width=.10\textwidth]{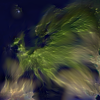}} 
{\includegraphics[width=.10\textwidth]{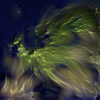}} 
{\includegraphics[width=.10\textwidth]{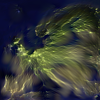}} 
{\includegraphics[width=.10\textwidth]{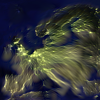}} 
{\includegraphics[width=.10\textwidth]{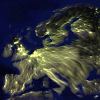}} 
{\includegraphics[width=.10\textwidth]{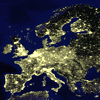}} \\[1.5ex]
{\includegraphics[width=.10\textwidth]{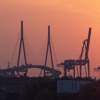}} 
{\includegraphics[width=.10\textwidth]{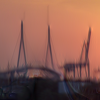}} 
{\includegraphics[width=.10\textwidth]{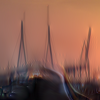}} 
{\includegraphics[width=.10\textwidth]{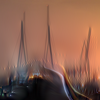}} 
{\includegraphics[width=.10\textwidth]{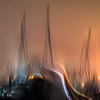}} 
{\includegraphics[width=.10\textwidth]{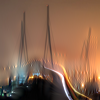}} 
{\includegraphics[width=.10\textwidth]{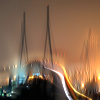}} 
{\includegraphics[width=.10\textwidth]{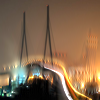}} 
{\includegraphics[width=.10\textwidth]{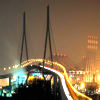}} 
\caption{\label{Fig:RGB_transport}
OT between RGB images. The images are displayed at intermediate time $t = \frac{i}{8}, i = 0,\dots, 8$.}
\end{figure*}

\section{RGB Image Transport} \label{sec:RGB}
In our first experiment we consider the periodic color OT between two RGB images $u_0$ and $u_1$ that are used as densities $f^0$ and $f^1$ respectively. 
At this point it is important to choose image pairs which have approximately the same mass (i.e.\ the overall sum of all pixels and color channels) 
as one needs to rescale the images such that $\norm{f^0}{1} = \norm{f^1}{1}$.
The results of four different examples are shown in Fig.~\ref{Fig:RGB_transport}. 
The first row shows an artificial example of the transport between one red Gaussian into a blue and a green Gaussian with smaller variance. 
In the second row, two polar lights of different color and shape are transported into each other. 
The third row illustrates how a topographic map of Europe is transported into a satellite image of Europe at night. 
Finally, the last row displays the transport between two cranes in Hamburg. 
All images are of size $100\times 100\times 3$ and in each case, we used $P = 32$ time steps and $2000$ iterations in our algorithm. 
Note that, however, already after $200$ iterations there are no visible changes any longer.
The images are depicted at intermediate times $t = \frac{i}{8}, i= 0,\dots,8$. 
In all cases one nicely sees a continuous change of color and shape during the transport.
\section{Hue Histogram Transport} \label{sec:hue}
\begin{figure} \centering
{\includegraphics[width=.10\textwidth]{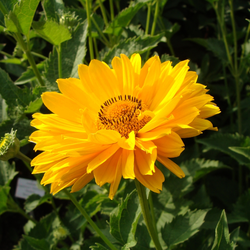}} 
{\includegraphics[width=.10\textwidth]{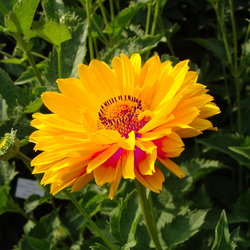}} 
{\includegraphics[width=.10\textwidth]{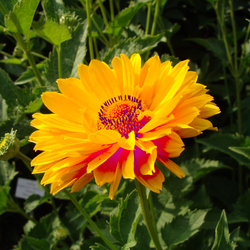}} 
{\includegraphics[width=.10\textwidth]{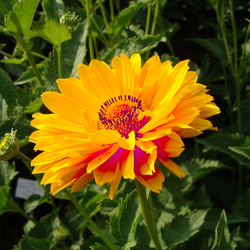}} 
{\includegraphics[width=.10\textwidth]{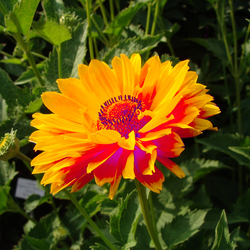}} 
{\includegraphics[width=.10\textwidth]{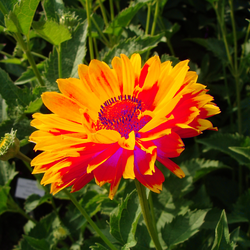}} 
{\includegraphics[width=.10\textwidth]{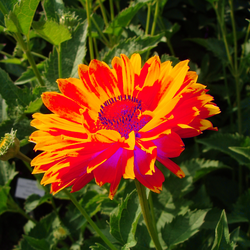}} 
{\includegraphics[width=.10\textwidth]{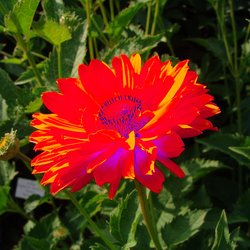}} 
{\includegraphics[width=.10\textwidth]{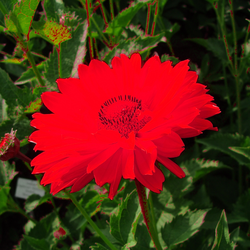}} 
\caption{\label{Fig:hue_transport}
OT between hue histograms and histogram specification at time  $t = \frac{i}{8}, i = 0,\dots, 8$.}
\end{figure}
%
In this section we perform color OT in the HSV space.
We assume the final and target image have the same saturation and value such that only the cyclic 
hue component has to be transported: Assume we are given two images $u_i$, $i=0,1$ 
which differ only in the hue component represented by their normalized histograms $h^i$  as empirical densities $f^i$, $i=0,1$. 
As the hue component is periodic, this fits into our setting. 
The intermediate histograms $h^t$, $t \in (0,1)$ are then used to obtain the hue images via histogram specification. 
Together with the original saturation and intensity they yield the images $u_t$, $t \in (0,1)$. 
For the histogram specification of periodic data we have applied the analysis in~\cite{RDG11} 
and the exact histogram specification method for real-valued data 
proposed, e.g., in~\cite{NS14}.
Fig.~\ref{Fig:hue_transport} shows an example, where the histogram of the hue component of a yellow flower is transported into the one of a red flower. 
The color changes gradually and in a realistic way which would not be the case if the periodicity of the hue histogram is not taken into account.
\\
{
	{\bf Acknowledgement:}
	Funding by the DFG within the Research Training Group 1932 is gratefully acknowledged.}
\footnotetext[1]{
	All images from Wikimedia Commons: AGOModra\_aurora.jpg by Comenius University under CC BY SA 3.0, Aurora-borealis\_andoya.jpg by M.~Buschmann under CC BY 3.0, Europe\_satellite\_orthographic.jpg and Earthlights\_2002.jpg by NASA, K\"ohlbrandbr\"ucke5478.jpg by G.~Ries under CC BY SA 2.5, K\"ohlbrandbr\"ucke.jpg by HafenCity1 under CC BY 3.0.}
\bibliographystyle{abbrv}

\end{document}